\documentclass[11pt]{article}
\usepackage[margin=1.1in]{geometry}
\usepackage[T1]{fontenc}
\usepackage{amsmath,amssymb,amsthm}
\usepackage{booktabs}
\usepackage{hyperref}

\newtheorem{lemma}{Lemma}
\newtheorem{conjecture}{Conjecture}
\newtheorem*{theoremA}{Theorem A}
\newtheorem*{theoremB}{Theorem B}
\newcommand{\rk}{\operatorname{rk}}
\title{Optimal single-track Gray codes from Gray-ordered necklaces}
\author{Chris Dzoba}
\date{October 2026}
\begin{document}
\maketitle

\begin{abstract}
A single-track Gray code of length $n$ is a cyclic Gray code of $n$-bit words in which every column is a cyclic shift
of the first. For prime $n$ the longest possible period is $2^n-2$. Whether it is attained for every prime has been open
since Hiltgen, Paterson and Brandestini introduced these codes in 1996, and optimal codes were known for primes
$n\le 19$. We show that a conjecture of Degni and Drisko (2007) on the reflected Gray code order of necklaces implies
optimal single-track Gray codes for every prime. Their list of necklaces, conjecturally a Gray path for prime $n$, closes
into a Hamilton cycle of the necklace graph after $(n-3)/2$ explicit P\'osa rotations, and such a cycle lifts to an
optimal code. This implication is formalized in Lean~4. Together with computation it gives optimal single-track Gray
codes for every odd prime $n\le 47$: for $n\le 37$ explicit codes checked word by word by a program independent of the
generator, and
for $n=41$, $43$ and $47$ codes obtained from the implication after the Degni--Drisko property was verified at these
lengths by two independently written programs.
\end{abstract}

\section{Introduction}

A \emph{single-track Gray code} of length $n$ and period $P$ is a cyclic sequence $w_0,\dots,w_{P-1}$ of distinct
$n$-bit words in which consecutive words (cyclically) differ in exactly one bit, and in which every column is a cyclic
shift of column $0$: there are \emph{head positions} $h_0=0,h_1,\dots,h_{n-1}$ with $(w_t)_j=(w_{t+h_j})_0$ for all
$t$ and $j$ (indices mod $P$). Hiltgen, Paterson and Brandestini \cite{HPB96} introduced these codes for absolute
rotary encoders. They showed that for $n\ge 3$ such a code contains neither $0^n$ nor $1^n$ and that $2n$ divides $P$.
For prime $n$ the period is therefore at most $2^n-2$, and a code of period $2^n-2$ is called \emph{optimal}. For $n$ a
power of $2$, Etzion and Paterson \cite{EP96} constructed codes of period $2^n-2n$, and Schwartz and Etzion
\cite{SE99} showed that this is best possible.

For prime $n$, optimal codes were known for $n\le 19$. M\"utze's survey \cite[Section 3.12]{Mut} records that existence
``has been tested and confirmed for all prime $n\le 19$'' and lists the general question as open; Etzion
\cite{Etz25} also lists the primes up to $19$ as the known cases. Zhang and Zhu \cite{ZZ08} list $2^n-2$ as the optimal
period with evenly spread heads for all primes $n\le 29$, derived from counting arguments (the number of aperiodic
necklaces and a necessary condition on the sequence of weights); they report codes ``with length up to 17'' and print
none. We know of no published optimal single-track Gray code for a prime $n\ge 23$.

M\"utze \cite[Section 3.12]{Mut} describes the standard reduction. For prime $n$ every nonconstant necklace has $n$
distinct rotations, and an optimal code arises from a cyclic Gray code through the $(2^n-2)/n$ nonconstant necklaces,
one representative each, in which the last representative is one flip away from a \emph{nontrivial} rotation of the
first. Lemma~\ref{lem:V} shows that the nontriviality condition can always be met. The problem is therefore exactly the
Hamiltonicity of the \emph{necklace graph} $Q_n$, whose vertices are the nonconstant necklaces of length $n$ and whose
edges join necklaces having representatives at Hamming distance one.

Degni and Drisko \cite{DD07} order necklaces by the first occurrence of a representative in the binary reflected Gray
code (BRGC). They conjectured, and verified for $n\le 37$, that consecutive necklaces in this order differ in a single
bit except immediately after powers of odd primes (in their sense). For prime $n$ this says that their list is a Gray
path through all necklaces, from $0^n$ to $1^n$; this is the prime case of Savage's question whether necklaces of odd
length admit a Gray code \cite{Sav97}, \cite[Problem P17]{Mut}. Without its constant ends the path runs from
$\langle 0^{n-1}1\rangle$ to $\langle 01^{n-1}\rangle$, which are not adjacent for $n\ge 5$. We close it
explicitly.

\begin{theoremA}
Let $n\ge 7$ be prime. If the Degni--Drisko list of necklaces of length $n$ is a Gray path, then $Q_n$ has a Hamilton
cycle, obtained from the list by $(n-3)/2$ P\'osa rotations at explicit pivots, and an optimal single-track Gray code
of length $n$ and period $2^n-2$ exists.
\end{theoremA}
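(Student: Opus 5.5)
The plan is to work entirely inside the Degni--Drisko list. Since $n$ is prime, every nonconstant necklace has exactly $n$ rotations. By hypothesis, deleting the constant ends leaves a Hamilton path
\[
v_1=\langle 0^{n-1}1\rangle,\ v_2,\ \dots,\ v_m=\langle 01^{n-1}\rangle,\qquad m=(2^n-2)/n,
\]
in $Q_n$. A P\'osa rotation at pivot $v_i$ (with $v_m\sim v_i$) replaces the path by $v_1\dots v_i\,v_m v_{m-1}\dots v_{i+1}$, whose new free end is $v_{i+1}$. The goal is a sequence of $(n-3)/2$ such rotations after which the free end is adjacent to $v_1$. The last step is then to lift the resulting Hamilton cycle to an optimal code using the reduction of the introduction together with Lemma~\ref{lem:V}, which supplies the nontrivial closing rotation.

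The neighbours of the two ends are easy to list. The neighbours of $v_1$ in $Q_n$ are exactly the weight-$2$ necklaces $\langle 0^{a}10^{b}1\rangle$; there are $(n-1)/2$ of them. The neighbours of $\langle 01^{n-1}\rangle$ are the $(n-1)/2$ weight-$(n-2)$ necklaces $\langle 01^{a}01^{b}\rangle$. Note that $(n-3)/2$ is one less than this count, which suggests that each rotation uses one weight-$(n-2)$ (or, by complementation, weight-$2$) necklace as a pivot.

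Step 1 is to compute the exact positions in the Degni--Drisko list of all necklaces of weights $1,2,n-2,n-1$, together with their immediate successors. I would do this directly from the BRGC rank: the first occurrence of a necklace is the minimum BRGC rank over its rotations, and for words of weight $2$ or $n-2$ this minimum is explicit because the BRGC rank is determined by the prefix-parity pattern. I also expect the list to be reversed by complementation, perhaps up to a reflection, since the BRGC order is compatible with complementing bits. If so, the weight-$(n-2)$ necklaces near the end of the list mirror the weight-$2$ necklaces near the start. Step 2 is to define the pivots: the $j$-th rotation uses the weight-$(n-2)$ necklace $\langle 01^{a_j}01^{b_j}\rangle$ adjacent to the current free end. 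The successor $v_{i+1}$ of each pivot should be a necklace adjacent to the next pivot, so that the rotations chain. After $(n-3)/2$ steps the new end should be a weight-$2$ necklace, or one with a representative at distance one from $0^{n-1}1$.

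Step 2 is where I expect the main obstacle. One has to check that every intended pivot still lies on the correct side of the current end; after several rotations the path is a concatenation of reversed segments, so indices shift. One also has to check that the successor of each pivot in the current path has the predicted form. I would first try to show that all pivots lie in one terminal segment of the original list whose structure is forced by the BRGC, namely the necklaces near the end, of high weight. In that case the rotations act on nested suffixes and the bookkeeping becomes an induction on $j$ with an explicit invariant describing the free end. The hypothesis $n\ge7$ should enter here, to guarantee that these positions are distinct and ordered as claimed. The Gray-path hypothesis itself is used only to know that the untouched segments remain valid paths.
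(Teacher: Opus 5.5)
Your frame is right: P\'osa rotations starting from the end $\langle 01^{n-1}\rangle$, then Lemma~\ref{lem:V} to lift the cycle. But the proposal never names the pivots, and the scheme it does suggest cannot close the path for $n\ge 7$. In a P\'osa rotation the pivot is adjacent to the current end, and the new end is adjacent to the pivot, since they were consecutive on a path in $Q_n$. Adjacent necklaces differ in weight by one, so each rotation changes the weight of the free end by $-2$, $0$ or $+2$. If every pivot has weight $n-2$, every free end has weight $n-1$ or $n-3$. The only nonconstant neighbours of $\langle 0^{n-1}1\rangle$, however, have weight $2<n-3$. In fact the weights are forced: going from weight $n-1$ to weight $2$ in exactly $K=(n-3)/2$ rotations requires every rotation to lower the end's weight by exactly two. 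So the pivots must have weights $n-2,n-4,\dots,3$.

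Only the first pivot has weight $n-2$. Your counting heuristic is consistent only at $n=5$, where $n-2=3$. Cataloguing the necklaces of weights $1,2,n-2,n-1$ therefore cannot suffice. Also, the pivots do not sit in a high-weight terminal segment of the list; they move all the way to its beginning, and the last one has BRGC rank $13$. The complementation symmetry you hope to use in Step~1 is not there either: already $D_5$ is not carried to its reverse by complementation. It is not needed.

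The missing idea is an explicit descending chain, located in $D_n$ via the BRGC successor. The paper's pivots are $A_0=0\,1^{n-5}0111$ and $A_k=0^{2k+1}10\,1^{n-2k-3}$ for $1\le k\le K-1$. The successive ends are $E_0=01^{n-1}$, $E_1=0\,1^{n-5}0101$ and $E_{k+1}=0^{2k+1}10\,1^{n-2k-5}01$, finishing at $E_K=0^{n-4}1001$, which is adjacent to $\langle 0^{n-1}1\rangle$. What must be proved is the following.
\begin{enumerate}
\item $\langle A_k\rangle$ is adjacent to $\langle E_k\rangle$, by an explicit flip.
\item $A_k$ is the first occurrence of its necklace, by comparing the prefix parities of its rotations.
\item $\pi(A_k)$ ends in $01$, so the BRGC successor of $A_k$ flips position $n-1$ and equals $E_{k+1}$. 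Since $E_{k+1}$ is again a first occurrence, $\langle E_{k+1}\rangle$ immediately follows $\langle A_k\rangle$ in $D_n$.
\item Because $A_k$ begins with exactly $2k+1$ zeros, $\rk(E_{k+1})=\rk(A_k)+1\le 2^{n-2k-1}<2^{n-2k}\le \rk(A_{k-1})$. For the first step one needs instead $\rk(A_0)<\rk(E_1)<\rk(E_0)$.
\end{enumerate}
The last inequality is the invariant your Step~2 is looking for. It puts each new pivot and its list successor in the untouched prefix before the previous pivot, so after the rotation at $\langle A_k\rangle$ the free end is exactly $\langle E_{k+1}\rangle$. Without a concrete pivot family of descending weight and these rank computations, the proposal is a search strategy rather than a proof, and as stated the strategy aims at the wrong pivots.
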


\begin{theoremB}
Optimal single-track Gray codes exist for every odd prime $n\le 47$. For $n\le 37$ the codes are given explicitly and
each has been checked word by word by a verifier independent of the generator (for $n=3$, Lean's kernel); for $n=41$, $43$ and $47$ the codes are the ones
produced by Theorem~A, and the hypothesis of Theorem~A was verified at these lengths by two independently written
programs.
\end{theoremB}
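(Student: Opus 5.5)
Theorem~B is a computational statement, so I will prove it by splitting the primes into three ranges and, for each range, giving a certificate together with the argument that the certificate implies an optimal code. For $n=3$ the optimal period is $2^3-2=6$. Here I exhibit the code directly: the six nonconstant words in the cyclic order $001,011,010,110,100,101$. Each step flips one bit, and the columns are cyclic shifts of column~$0$. This check is small enough that I will state it as a Lean definition and let the kernel decide the defining properties by evaluation. For $n=5$ I do the same by hand or by program; it is also covered by the $n\le 37$ computation below.

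For $5\le n\le 37$ I use the Degni--Drisko list, whose Gray property they verified for $n\le 37$; for $n=5$ I instead take a small Hamilton cycle of $Q_5$. For $n\ge 7$, Theorem~A turns the list into a Hamilton cycle of $Q_n$. The lifting step is M\"utze's reduction together with Lemma~\ref{lem:V}: choose representatives greedily along the cycle so that consecutive ones differ in one bit. Lemma~\ref{lem:V} guarantees the closing rotation is nontrivial. Concatenating the $n$ rotated copies then yields the code with heads $h_j = j(2^n-2)/n$.

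I will not rely on that chain of reasoning alone. Each code is written out and checked by a separate verifier that shares no code with the generator. It confirms four things: there are $2^n-2$ words, they are pairwise distinct (a bitset of size $2^n$ suffices up to $n=37$ with care, or sorting by chunks), consecutive words have Hamming distance one cyclically, and the column-shift identity $(w_t)_j=(w_{t+h_j})_0$ holds for all $t,j$. Because every word is checked, correctness does not depend on Theorem~A or on the Degni--Drisko verification.

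For $n=41,43,47$ writing out all $2^n-2$ words is infeasible, so I will only verify the hypothesis of Theorem~A and invoke the theorem, which is formalized in Lean. Concretely, two independently written programs enumerate the Degni--Drisko order for these $n$ and check that consecutive necklaces have representatives at distance one. The order is first occurrence of a representative in the BRGC. For prime $n$, each nonconstant necklace has exactly $n$ rotations, so the list has $(2^n-2)/n+2$ entries, which is about $5\cdot 10^{11}$ for $n=47$. The checks must therefore stream through the list, with a successor rule for the order and a cheap adjacency test in canonical (e.g.\ least-rotation) form, rather than store it.

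The main obstacle is this last verification at $n=47$. It needs an efficient generator of the Degni--Drisko order, i.e.\ a way to decide whether a BRGC word is the first occurrence of its necklace without remembering all earlier words. It also needs the two programs to be genuinely independent, so that agreement is meaningful. I would try a rank-based test first: a word is the first occurrence of its necklace exactly when its BRGC rank is smaller than the ranks of all its other $n$ rotations, and this takes $O(n^2)$ time per word. This is parallelizable over ranges of ranks.
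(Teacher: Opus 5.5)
Your proposal is correct and is essentially the paper's argument: $n=3$ by an explicit code decided in Lean's kernel, each prime $5\le n\le 37$ by an explicit code regenerated word by word by a verifier independent of the generator, and $n=41,43,47$ by Theorem~A after two independently written streaming checks of Conjecture~\ref{conj:DD} (for $n\le 37$ how the code was found does not matter; the paper used P\'osa-rotation search for $5\le n\le 31$ and Theorem~A for $n=37$). Two small slips to fix: the heads from Lemma~\ref{lem:V} are the multiples of $m=(2^n-2)/n$, but which column gets which multiple depends on the closing shift $s$, so $h_j=jm$ holds only for special $s$ or after relabelling the columns by a multiplier mod $n$ (your verifier must test the actual heads, and the paper's $n=37$ code has $s=2$); and $D_{47}$ has about $3\cdot 10^{12}$ necklaces, not $5\cdot 10^{11}$.
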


Theorem~A is formalized in Lean~4 with Mathlib (ancillary files). The Lean theorem \texttt{stgc\_of\_DD} states that for
every prime $n\ne 2$ the Degni--Drisko property at $n$, an explicit hypothesis, implies the existence of a single-track
Gray code of length $n$ and period $2^n-2$; its proof uses only Lean's standard axioms. The hypothesis there, adjacency in
$Q_n$ of consecutive necklaces of $D_n$, is Conjecture~\ref{conj:DD} below. It is implied by the conjecture as stated in
\cite{DD07}, which asks that consecutive first-occurrence words, constants included, differ in one bit. Lean proves the
hypothesis itself only for $n=5$, by computation. For $7\le n\le 37$ it was verified by Degni and Drisko
\cite{DD07}, and for $n=37$, $41$, $43$ and $47$ by the computations of Section~\ref{sec:comp}.

\section{From Hamilton cycles in the necklace graph to codes}

Words are $x=x_1x_2\cdots x_n$, and $\rho(x)=x_2\cdots x_nx_1$ is the left rotation; $\langle x\rangle$ denotes the
necklace (rotation class) of $x$, and $e_i$ the word with a single $1$ in position $i$.

\begin{lemma}\label{lem:V}
Let $n$ be prime. If $Q_n$ has a Hamilton cycle, then there is an optimal single-track Gray code of length $n$, with
equally spaced heads.
\end{lemma}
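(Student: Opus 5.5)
The plan is to lift the Hamilton cycle to a closed walk in the hypercube and then use the rotational symmetry. Let $N_0,N_1,\dots,N_{m-1}$ be the Hamilton cycle of $Q_n$, where $m=(2^n-2)/n$. Since $n$ is prime, every nonconstant necklace has exactly $n$ distinct rotations. The set of nonconstant words is invariant under $\rho$, and Hamming distance is $\rho$-invariant. Hence if $N_i$ and $N_{i+1}$ are adjacent in $Q_n$, then every representative of $N_i$ is at distance one from some representative of $N_{i+1}$.

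The lifting is greedy. Fix $x_0\in N_0$. Inductively pick $x_{i+1}\in N_{i+1}$ at distance one from $x_i$ for $i<m-1$. Finally pick a representative $x_m\in N_0$ at distance one from $x_{m-1}$, which exists because $N_{m-1}N_0$ is an edge. Then $x_m=\rho^s(x_0)$ for some $s\in\mathbb Z/n$.

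Now suppose $s\not\equiv 0$. Define $w_{km+i}=\rho^{ks}(x_i)$ for $0\le i<m$ and $0\le k<n$, with indices mod $nm=2^n-2$. The code has the required properties:
\begin{itemize}
\item \emph{Gray property.} Consecutive words differ in one bit. Inside a block this holds by construction. Across blocks, $\rho^{ks}(x_{m-1})$ and $\rho^{(k+1)s}(x_0)=\rho^{ks}(x_m)$ are at distance one. The wrap-around from $k=n-1$ to $k=0$ works because $ns\equiv 0$.
\item \emph{Distinctness.} The words are distinct: different $i$ give different necklaces. For fixed $i$, the $n$ rotations $\rho^{ks}(x_i)$ are distinct, since $s$ generates $\mathbb Z/n$ ($n$ prime) and $x_i$ is aperiodic.
\item \emph{Count.} These are all $2^n-2$ nonconstant words.
\item \emph{Single-track.} We have $w_{t+m}=\rho^s(w_t)$ for all $t$, so $(w_{t+km})_j=(w_t)_{j+ks}$. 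Taking $j=0$ and letting $c\equiv ks$ run over all residues (again because $s$ generates), column $c$ is column $0$ shifted by $km$. So the head positions are the multiples of $m$, equally spaced.
\end{itemize}

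The main obstacle is the case $s\equiv 0$, where the lift closes up inside $N_0$. The plan is to exploit a vertex where the lift has a genuine choice, namely the weight-one necklace $\langle 0^{n-1}1\rangle$. Reindex the cycle so that it is $N_a$ with $0<a<m$. Its neighbours in $Q_n$ are weight-two necklaces only, since $0^n$ is excluded, so $x_{a-1}=e_p+e_q$ with $p\ne q$. Then both $e_p$ and $e_q=\rho^{d}(e_p)$, for a suitable $d\not\equiv 0$, are valid choices for $x_a$. Replacing the choice $e_p$ by $e_q$ and replacing every later $x_i$ ($i\ge a$, including $x_m$) by $\rho^{d}(x_i)$ gives another valid lift, because $\rho^d$ preserves adjacency. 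It changes the closing exponent from $s$ to $s+d$ (with the appropriate sign convention for $\rho$).

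Since $d\not\equiv 0 \pmod n$, at least one of $s$ and $s+d$ is nonzero. With that choice the construction above gives an optimal code with equally spaced heads. The only care needed is the bookkeeping of rotation directions in the relation $e_q=\rho^d(e_p)$. The argument only uses that some step of the lift admits two choices differing by a nontrivial rotation, so the exact value of $d$ does not matter.
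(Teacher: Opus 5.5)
Your proof is correct and follows the paper's argument. You lift greedily, expand by the blocks $\rho^{ks}(x_i)$ when $s\not\equiv 0$, and otherwise re-choose the lift at the weight-one necklace $\langle 0^{n-1}1\rangle$, rotating everything after it so that the closing shift changes by a nonzero amount. The only difference is cosmetic: you use the two choices on the step into $\langle 0^{n-1}1\rangle$, placed at an interior index, whereas the paper puts it at index $0$ and uses the two choices $e_a\oplus e_b$ and $e_a\oplus e_{2a-b}$ on the step out of it.
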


\begin{proof}
Let $N_0,\dots,N_{m-1}$ be the cycle, $m=(2^n-2)/n$. Choose $w_0\in N_0$ and, inductively, bits $b_i$ with
$w_{i+1}=w_i\oplus e_{b_i}\in N_{i+1}$ for $i<m-1$; finally choose $b_{m-1}$ with $w_{m-1}\oplus e_{b_{m-1}}\in N_0$,
say $w_{m-1}\oplus e_{b_{m-1}}=\rho^s(w_0)$. If $s\not\equiv 0\pmod n$, put $W_{qm+i}=\rho^{qs}(w_i)$ for
$0\le q<n$, $0\le i<m$. Rotations are automorphisms of the cube, so consecutive words differ in one bit, also across
the boundaries $W_{qm+m-1}\to W_{(q+1)m}=\rho^{(q+1)s}(w_0)$ and cyclically since $ns\equiv 0$. The word $W_{qm+i}$
lies in $N_i$, and $q\mapsto\rho^{qs}(w_i)$ is injective because $s$ is invertible mod $n$ and $N_i$ has $n$ distinct
rotations; so the $nm=2^n-2$ words are exactly the nonconstant words. Finally $W_{t+m}=\rho^{s}(W_t)$ for all $t$,
so column $ks \bmod n$ is column $0$ delayed by $km$, which is the single-track property with heads
$h_{ks}=-km \bmod (2^n-2)$.

It remains to make $s\ne 0$. Rotate the cycle so that $N_0=\langle 0^{n-1}1\rangle$ and take $w_0=e_a$, a word with a
single $1$, in position $a$; then $w_1=e_a\oplus e_b$ for some $b\ne a$. Identify a word with the set of positions of
its ones, so that $\rho$ shifts every position down by one (mod $n$), and put $i\equiv b-a\not\equiv 0\pmod n$. Replace
the lift by $w_0'=w_0$ and $w_j'=\rho^{i}(w_j)$ for $1\le j<m$. Since $\rho^i(w_1)=\{a-i,\,b-i\}=\{2a-b,\,a\}$
contains $a$, the first step $w_0'\to w_1'$ is still a single flip; the later steps are single flips because $\rho^i$ is
a cube automorphism; $w_j'\in N_j$ for all $j$; and the closing step now ends at $\rho^{i}(\rho^{s}(w_0))=
\rho^{s+i}(w_0)$. So the new shift is $s+i$, and since $i\not\equiv0$, one of $s$ and $s+i$ is nonzero.
\end{proof}

\section{The Degni--Drisko list and its closing}\label{sec:closing}

The BRGC lists $g(t)=t\oplus\lfloor t/2\rfloor$, $t=0,\dots,2^n-1$, as $n$-bit words with the most significant bit
first. Its inverse is the prefix-parity map: $\rk(x)=g^{-1}(x)$ has binary digits $\pi(x)_i=x_1\oplus\cdots\oplus x_i$.
Hence for words of equal length, $\rk(x)<\rk(y)$ if and only if $\pi(x)$ is lexicographically smaller than $\pi(y)$.
The \emph{first occurrence} of a necklace is its rotation of smallest rank. Let $D_n$ be the list of the nonconstant
necklaces of length $n$ ordered by the ranks of their first occurrences. (Degni and Drisko list all necklaces; $0^n$
has rank $0$ and $1^n$ is the only necklace whose first occurrence begins with $1$, so $D_n$ is a contiguous part of
their list.)

\begin{conjecture}[Degni--Drisko \cite{DD07}, prime case]\label{conj:DD}
For prime $n$, consecutive necklaces of $D_n$ are adjacent in $Q_n$.
\end{conjecture}

Degni and Drisko verified their conjecture for $n\le 37$ \cite[Section 3]{DD07}. For composite odd $n$ the adjacency
fails, for example at $n=9$, $15$ and $21$, as the exceptions in their conjecture allow.

Let $n\ge7$ be odd and $K=(n-3)/2$. Define the words
\[
E_0=0\,1^{n-1},\qquad A_0=0\,1^{n-5}\,0111,\qquad E_1=0\,1^{n-5}\,0101,
\]
and for $1\le k\le K-1$
\[
A_k=0^{2k+1}\,1\,0\,1^{n-2k-3},\qquad E_{k+1}=0^{2k+1}\,1\,0\,1^{n-2k-5}\,0\,1 .
\]
Thus $E_K=0^{n-4}1001$. Write $c_k=n-2k-3$, which is even, with $c_k\ge 2$ for $k\le K-1$ and $c_{K-1}=2$. A
\emph{P\'osa rotation} of a path $P=(P_0,\dots,P_L)$ at a pivot $P_j$ adjacent to the end $P_L$ replaces $P$ by
$P_0\cdots P_j\,P_L P_{L-1}\cdots P_{j+1}$, whose new end is $P_{j+1}$.

\begin{lemma}\label{lem:last}
For odd $n$ the first necklace of $D_n$ is $\langle0^{n-1}1\rangle$ and the last is $\langle E_0\rangle$.
\end{lemma}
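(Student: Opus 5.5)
Everything reduces to comparing prefix-parity words, since by the discussion above $\rk(x)<\rk(y)$ iff $\pi(x)$ is lexicographically smaller than $\pi(y)$. So we find, for each nonconstant necklace, the rotation minimizing $\pi$ lexicographically, and identify the extreme values of these minima.

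\emph{First necklace.} A word $x$ with $\pi(x)=0^{n-1}1$ must have $x_1=\dots=x_{n-1}=0$ and $x_n=1$, so $x=0^{n-1}1$. Any nonconstant word has $\pi(x)\ne0^n$, because $\pi$ is a bijection and $\pi(0^n)=0^n$. Every other nonzero $\pi$-value is lexicographically at least $0^{n-1}1$, and the inequality is strict unless $x=0^{n-1}1$. Hence $\langle 0^{n-1}1\rangle$ has the smallest first-occurrence rank, namely $1$, among nonconstant necklaces.

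\emph{Last necklace.} Every nonconstant necklace contains a $0$ and a $1$, so it has a rotation beginning $01$ (a $0$ followed cyclically by a $1$). For such a rotation $\pi$ begins $01$. If some rotation begins $00$, its $\pi$ begins $00$, which is smaller still. So the first occurrence has $\pi$ beginning $00$ whenever the necklace contains two cyclically adjacent zeros. The necklaces with no two adjacent zeros, other than $\langle 1^n\rangle$, are the candidates for the last position.

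Among rotations beginning $01$, $\pi$ has prefix $01$ followed by the complemented prefix parities of the remainder. The task is to show that $\langle 01^{n-1}\rangle$ has the largest minimum. For $E_0=01^{n-1}$ we get $\pi(E_0)=0\,1010\cdots$, and its other rotations begin with $1$ and are irrelevant, so its rank is $\rk(E_0)$ with $\pi(E_0)=0(10)^{(n-1)/2}$.

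Now take any other necklace with at least two zeros, no two adjacent. Write a rotation starting at a zero as $0\,1^{a_1}\,0\,1^{a_2}\cdots 0\,1^{a_r}$ with $r\ge2$ and all $a_i\ge1$. In $\pi$, a run $1^{a}$ after a $0$ produces alternating bits, and the following $0$ repeats the current parity bit. Comparing with $0(10)^{(n-1)/2}$, the alternation is broken at the first internal zero. Choose the rotation whose first block $a_1$ is the minimum. At position $a_1+2$, $\pi$ repeats the previous bit, whereas $\pi(E_0)$ alternates.

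\emph{Main obstacle.} The delicate step is to show that this break makes $\pi$ smaller. The bit at position $a_1+1$ of $\pi(E_0)$ is $1$ when $a_1$ is odd and $0$ when $a_1$ is even. The repeat gives the candidate a smaller bit only in the first case. When $a_1$ is even, the candidate is locally larger, so a different rotation has to be chosen. I would try taking a block with $a_i$ odd, which must exist: $n=r+\sum a_i$ is odd, so if all $a_i$ were even then $r$ would be odd, and I would handle that subcase separately by looking at the first block whose parity places a $0$ where $E_0$ has $1$. Alternatively, one could characterize the first occurrence directly as the rotation maximizing the index of the first parity break, suitably signed. Either route exhibits, for every other necklace, a rotation with $\pi$ lexicographically below $\pi(E_0)$. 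That shows $\langle E_0\rangle$ is last.
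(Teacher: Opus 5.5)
\textbf{There is a genuine gap: the key comparison has its sign reversed.} Your treatment of the first necklace is fine, and so is your treatment of necklaces with two cyclically adjacent zeros. Take a rotation $w=0\,1^{a}\,0\cdots$ with $a+2\le n$. Then $\pi(w)$ and $\pi(E_0)=0(10)^{(n-1)/2}$ agree through position $a+1$, where both equal $a \bmod 2$. At position $a+2$ the parity of $w$ repeats this bit, while that of $E_0$ flips it. So $w$ ranks \emph{below} $E_0$ exactly when $a$ is even: then $w$ has $0$ where $E_0$ has $1$. When $a$ is odd, $w$ ranks \emph{above} $E_0$. You state the opposite.

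As a result, your proposed fix cannot work. Starting at a block with $a_i$ odd always gives a rotation ranked above $\rk(E_0)$, which says nothing about where the necklace's first occurrence lies. The subcase you defer ("all $a_i$ even") is in fact the easy one, where any block works. Choosing $a_1$ minimal is irrelevant, since only the parity of the block matters. The closing claim that "either route exhibits" a smaller rotation is therefore unsupported.

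The missing step is the existence of an \emph{even} block, and this is where the oddness of $n$ enters. If all $a_i$ were odd, then $n=r+\sum a_i\equiv 2r\equiv 0 \pmod 2$, a contradiction. Since $r\ge 2$, the rotation starting at that even block has its second zero at position $a_i+2\le n$, so it precedes $E_0$ in rank. This is exactly the paper's argument. The paper also allows runs of length $a_i=0$, so two adjacent zeros are just an empty (hence even) run and need no separate case.
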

\begin{proof}
The first claim holds because $g(1)=0^{n-1}1$. The first occurrence of $\langle E_0\rangle$ is $E_0$ itself
(rotations beginning with $1$ have larger rank), with $\pi(E_0)=0(10)^{(n-1)/2}$. Let $X\ne\langle E_0\rangle$ be a
nonconstant necklace; it has at least two zeros. Let $a_1,\dots,a_z$ ($z\ge2$) be the lengths of the cyclic runs of
ones between consecutive zeros. If all $a_i$ were odd, then $n=z+\sum a_i\equiv 2z\equiv0\pmod2$, which is false; so
some $a_i$ is even. The rotation $w=0\,1^{a_i}\,0\cdots$ has prefix parities equal to those of $E_0$ up to position
$a_i+1$, where both are $0$; at position $a_i+2\le n$ the parity of $w$ stays $0$ while that of $E_0$ becomes $1$.
So $\rk(w)<\rk(E_0)$, and the first occurrence of $X$ precedes that of $\langle E_0\rangle$.
\end{proof}

\begin{lemma}\label{lem:adj}
$\langle A_k\rangle$ and $\langle E_k\rangle$ are adjacent in $Q_n$ for $0\le k\le K-1$.
\end{lemma}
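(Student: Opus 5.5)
Adjacency in $Q_n$ only requires some representative of $\langle A_k\rangle$ and some representative of $\langle E_k\rangle$ at Hamming distance one. So we exhibit, for each $k$, a rotation of $E_k$ that differs from $A_k$ in exactly one position. The statement then reduces to comparing two explicit words, one for $k=0$ and one for general $1\le k\le K-1$. Both words must also be nonconstant, so that they are vertices of $Q_n$; this is clear since each contains both a $0$ and a $1$.

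For $k=0$ no rotation is needed. We have $A_0=0\,1^{n-5}\,0111$ and $E_1=0\,1^{n-5}\,0101$, but the lemma pairs $A_0$ with $E_0=0\,1^{n-1}$. The word $A_0$ has exactly two zeros, at positions $1$ and $n-3$. Flipping position $n-3$ gives $0\,1^{n-1}=E_0$, so $A_0$ and $E_0$ differ in one bit.

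For $1\le k\le K-1$ the pair is $A_k=0^{2k+1}\,1\,0\,1^{c_k}$ and $E_k=0^{2k-1}\,1\,0\,1^{n-2k-3}\,0\,1$. The latter follows from the definition of $E_{k+1}$ with $k$ replaced by $k-1$, since $n-2(k-1)-5=n-2k-3=c_k$; the case $k=1$ uses the formula for $k-1=0$ only if it agrees with $E_1$, and indeed $0^1\,1\,0\,1^{n-5}\,0\,1$ is not literally $E_1=0\,1^{n-5}\,0101$, so the case $k=1$ is checked separately below. For $k\ge2$ we rotate $E_k$ so that its final $1$ moves to the front: $1\,0^{2k-1}\,1\,0\,1^{c_k}\,0$. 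This does not match $A_k$ directly. The better choice is to rotate $E_k$ left so that the trailing $0\,1$ wraps around the leading zeros, giving the cyclic word $0^{2k-1}\,1\,0\,1^{c_k}\,0\,1$. Starting the word at the final $1$ and reading cyclically yields the rotation $1\,0^{2k-1}\,1\,0\,1^{c_k}\,0$. Compare this with a rotation of $A_k$. Cyclically, $A_k$ is a block $0^{2k+1}$, then $1$, then $0$, then $1^{c_k}$. Cyclically, $E_k$ is $0^{2k}$ (the trailing $0$ merged with the leading $0^{2k-1}$), then $1$, then $0$, then $1^{c_k}$, then an extra $1$ wedged between. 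Precisely, the cyclic sequence of $E_k$ is $\cdots 1^{c_k}\,0\,1\,0^{2k-1}\,1\,0\,1^{c_k}\cdots$. Flipping that isolated $1$ to $0$ produces $0^{2k+1}\,1\,0\,1^{c_k}$ cyclically, which is $\langle A_k\rangle$.

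Concretely, the rotation of $E_k$ beginning at its trailing $0$ is $0\,1\,0^{2k-1}\,1\,0\,1^{c_k}$. Flipping its second bit gives $0^{2k+1}\,1\,0\,1^{c_k}=A_k$. The length works out as $2k+1+2+c_k=n$. The same computation with $E_1=0\,1^{n-5}\,0101$ reads cyclically as $\cdots0101\,0\,1^{n-5}\cdots$, i.e.\ $\cdots 1^{n-5}\,0\,1\,0\,1\,0\cdots$. Its rotation $0\,1\,0\,1\,0\,1^{n-5}$, with the second bit flipped, gives $000\,1\,0\,1^{n-5}=A_1$, since $c_1=n-5$.

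The only real obstacle is bookkeeping: correctly reading off $E_k$ from the shifted index in the definition, and handling the $k=1$ boundary case. Once the cyclic run structure is written out, each case is a single visible flip.
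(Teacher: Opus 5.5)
Your proof is correct and is essentially the paper's. For $k=0$ you flip position $n-3$ to pass between $A_0$ and $E_0$, and for $k\ge1$ you take the rotation $0\,1\,0^{2k-1}\,1\,0\,1^{c_k}$ of $E_k$ and flip its second bit to get $A_k$; your separate check at $k=1$ matches the paper's remark that $0\,1\,0\,1^{n-5}\,0\,1$ is a rotation of $E_1$.
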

\begin{proof}
Flipping position $n-3$ of $E_0$ gives $A_0$. For $k\ge1$, the end before step $k$ is (a rotation of)
$0^{2k-1}10\,1^{c_k}01$; its rotation $01\,0^{2k-1}10\,1^{c_k}$ becomes $A_k$ when its second bit is flipped. (For
$k=1$ this word is a rotation of $E_1$ as defined above.)
\end{proof}

\begin{lemma}\label{lem:first}
$A_k$ is the first occurrence of its necklace.
\end{lemma}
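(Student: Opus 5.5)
The plan is to compute prefix parities directly. By the remark before Conjecture~\ref{conj:DD}, $\rk(x)<\rk(y)$ if and only if $\pi(x)$ precedes $\pi(y)$ lexicographically. So it suffices to show that $\pi(A_k)$ is lexicographically smallest among the prefix-parity words of the rotations of $A_k$. Any rotation beginning with $1$ has $\pi$ beginning with $1$, while $A_k$ begins with $0$. Hence only rotations beginning with a $0$ need to be compared, one for each zero of $A_k$, and I would split into the cases $k\ge1$ and $k=0$.

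\emph{Case $k\ge1$.} Cyclically, $A_k=0^{2k+1}\,1\,0\,1^{c_k}$ has exactly two runs of zeros: one of length $2k+1\ge3$ and one of length $1$, since the word ends in ones and starts with zeros.
\begin{itemize}
\item A rotation that starts inside the long run with $j$ zeros before the next $1$ has $\pi$ beginning $0^j1$. The largest $j$, namely $j=2k+1$, is attained only by $A_k$ itself. Any other such rotation has $\pi$ with a $1$ at position $j+1\le 2k+1$, where $\pi(A_k)$ still has a $0$.
\item The rotation starting at the isolated zero is $0\,1^{c_k}\cdots$, with $c_k\ge2$. Its $\pi$ begins $01$, whereas $\pi(A_k)$ begins $00$.
\end{itemize}
So $A_k$ is strictly smallest.

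\emph{Case $k=0$.} Here $A_0=0\,1^{n-5}\,0\,111$ has two zeros, so the only competitor is $w=0\,111\,0\,1^{n-5}$. This mirrors Lemma~\ref{lem:last}: the run of ones after the first zero of $A_0$ has even length $n-5$, and the run after the first zero of $w$ has odd length $3$.
\begin{itemize}
\item For $\pi(A_0)$, the parities run $0,1,0,1,\dots$ through the ones. Because $n-5$ is even, the parity is $0$ at position $n-4$, and it stays $0$ at position $n-3$ (the second zero).
\item For $w$, the prefix parities are $0,1,0,1,1,\dots$. The fifth entry is $1$ because the bit there is the zero following three ones.
\end{itemize}
If $n\ge 9$, then $n-5\ge4$. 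The two sequences then agree through position $4$, and at position $5$ we get $\pi(A_0)_5=0<1=\pi(w)_5$. If $n=7$, the first zero-run of ones is shorter: $\pi(A_0)=0100101$ and $\pi(w)=0101101$, which first differ at position $4$, with $0<1$. In either case $\rk(A_0)<\rk(w)$.

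The only real care needed is in the $k=0$ case. One must pin down the exact first position where the two parity sequences diverge, and this position depends on whether $n-5\ge 4$. I would therefore state it uniformly as follows: the first divergence occurs at position $\min(n-3,5)$, where $A_0$ has parity $0$ and $w$ has parity $1$.
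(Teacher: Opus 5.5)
Your proof is correct and follows the paper's argument. For $k\ge1$ you use the fact that only $A_k$ itself has a prefix parity beginning with $2k+1$ zeros. For $k=0$ you compare $A_0$ with its single competitor $01110\,1^{n-5}$, and the first divergence comes at position $4$ when $n=7$ and at position $5$ when $n\ge9$, exactly as in the paper.
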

\begin{proof}
The prefix parity $\pi(y)$ begins with exactly as many zeros as $y$. For $k\ge1$, $A_k$ begins with $0^{2k+1}$ and
ends with $1$, its cyclic runs of zeros have lengths $2k+1$ and $1$, and $2k+1\ge3$; so $A_k$ is the only rotation
whose prefix parity begins with $2k+1$ zeros, and it has the smallest rank. For $k=0$ write $A_0=01^a0111$ with
$a=n-5\ge2$ even. Rotations beginning with $1$ are larger, and the only other rotation beginning with $0$ is
$R=01110\,1^a$. The prefix parities of $A_0$ and $R$ agree in positions $1$--$3$; if $a=2$, position $4$ gives $0$
for $A_0$ and $1$ for $R$, and if $a\ge4$ they agree in position $4$ and position $5$ gives $0$ for $A_0$ and $1$
for $R$.
\end{proof}

\begin{lemma}\label{lem:succ}
$g(\rk(A_k)+1)=E_{k+1}$, and $E_{k+1}$ is the first occurrence of its necklace. Hence $\langle E_{k+1}\rangle$
immediately follows $\langle A_k\rangle$ in $D_n$.
\end{lemma}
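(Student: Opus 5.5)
The plan is to compute the BRGC successor of $A_k$ directly with the standard one-step rule, and then to show that $E_{k+1}$ is a first occurrence by a rank comparison, as in Lemma~\ref{lem:first}. The rule is this. For $t<2^n-1$, the word $g(t+1)$ is obtained from $g(t)$ by one flip. If $t$ is even, flip the last bit. If $t$ is odd, flip the bit immediately to the left of the rightmost $1$ of $g(t)$. Since $\rk(x)$ has last binary digit $\pi(x)_n=x_1\oplus\cdots\oplus x_n$, the parity of $t=\rk(x)$ is the parity of the weight of $x$. I will recall this rule briefly; it follows from $g(t)=t\oplus\lfloor t/2\rfloor$ by looking at the carry chain of $t+1$.

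First I apply the rule to $A_k$. For $k\ge1$, $A_k=0^{2k+1}10\,1^{c_k}$ has weight $1+c_k$, which is odd since $c_k$ is even. So $\rk(A_k)$ is odd. The rightmost $1$ of $A_k$ is at position $n$, and because $c_k\ge2$ the bit at position $n-1$ is also a $1$. Flipping it gives $0^{2k+1}10\,1^{c_k-2}01=E_{k+1}$. For $k=0$, $A_0=01^{n-5}0111$ has weight $n-2$, which is odd. Flipping position $n-1$ gives $01^{n-5}0101=E_1$. This proves $g(\rk(A_k)+1)=E_{k+1}$.

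Second, I show $E_{k+1}$ is a first occurrence; I expect this to be the only step needing care. For $k\ge1$, $E_{k+1}$ begins with $0^{2k+1}$ and ends with $1$. Its cyclic runs of zeros have lengths $2k+1,1,1$, and $2k+1\ge3$, so it is the unique rotation whose prefix parity begins with $2k+1$ zeros. The argument of Lemma~\ref{lem:first} then applies verbatim. For $k=0$, write $E_1=0\,1^a\,0101$ with $a=n-5\ge2$ even. Rotations beginning with $1$ have larger rank. The other two rotations beginning with $0$ are $01010\,1^a$ and $010\,1^a01$, and both begin with $010$. Their prefix parities begin $0,1,1$, whereas $\pi(E_1)$ begins $0,1,0$ because $E_1$ begins $011$. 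So $E_1$ has the smallest rank among its rotations.

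Finally, the consequence. By Lemma~\ref{lem:first} and the above, $\rk(A_k)$ and $\rk(E_{k+1})=\rk(A_k)+1$ are the ranks of the first occurrences of $\langle A_k\rangle$ and $\langle E_{k+1}\rangle$. No integer lies strictly between them, so no necklace's first occurrence has rank between them. Since $D_n$ is ordered by these ranks, $\langle E_{k+1}\rangle$ immediately follows $\langle A_k\rangle$ in $D_n$.
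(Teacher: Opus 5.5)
Your proof is correct and follows the paper's route. The paper gets $\rk(A_k)\equiv1\pmod 4$ from the last two prefix parities of $A_k$ and flips at the lowest set bit of $\rk(A_k)+1$, which is the same BRGC successor rule you state in its weight-parity/rightmost-one form, and your first-occurrence argument and final rank-adjacency step match the paper's. One small slip: for $k=K-1$ the block $1^{n-2k-5}$ is empty, so $E_K=0^{n-4}1001$ has zero runs of lengths $2k+1$ and $2$, not $2k+1,1,1$; the uniqueness argument still holds because $2<3\le 2k+1$.
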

\begin{proof}
The words $g(r)$ and $g(r+1)$ differ exactly in the position of the lowest set bit of $r+1$. The word $A_k$ ends in
an even block of ones preceded by a zero (for $k\ge1$ the block $1^{c_k}$, entered with parity $1$; for $k=0$ the
block $111$, entered with parity $0$), so $\pi(A_k)$ ends in $01$; thus $r=\rk(A_k)\equiv1\pmod4$, the lowest set bit
of $r+1$ is in position $n-1$, and flipping position $n-1$ of $A_k$ gives $E_{k+1}$. For $k\ge1$, $E_{k+1}$ begins
with $0^{2k+1}$, ends with $1$, and its other runs of zeros have length $1$ or $2$, so the argument of
Lemma~\ref{lem:first} applies. For $k=0$, the rotations of $E_1=01^a0101$ beginning with $0$ are $E_1$, $01010\,1^a$
and $010\,1^a01$, whose prefix parities begin $010$, $011$ and $011$. Since $E_{k+1}$ is the BRGC successor of $A_k$
and is a first occurrence, no necklace has its first occurrence strictly between them.
\end{proof}

\begin{lemma}\label{lem:legal}
Let $j_k$ be the position of $\langle A_k\rangle$ in $D_n$. Starting from $D_n$ and performing the P\'osa rotations at
$\langle A_0\rangle,\langle A_1\rangle,\dots,\langle A_{K-1}\rangle$ in turn is legal, and after the rotation at
$\langle A_k\rangle$ the end of the path is $\langle E_{k+1}\rangle$.
\end{lemma}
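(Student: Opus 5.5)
Argue by induction on $k$, keeping track of the path before the $k$-th rotation. Assume Conjecture~\ref{conj:DD}, so $D_n$ is a Hamilton path of $Q_n$. By Lemma~\ref{lem:last} it starts at $\langle 0^{n-1}1\rangle$ and ends at $\langle E_0\rangle$. The inductive invariant, before the rotation at $\langle A_k\rangle$, has three parts:
\begin{itemize}
\item[(i)] the current path ends at $\langle E_k\rangle$;
\item[(ii)] $\langle A_k\rangle$ and $\langle E_k\rangle$ both lie on the path, with $\langle A_k\rangle$ strictly before $\langle E_k\rangle$;
\item[(iii)] the vertex immediately after $\langle A_k\rangle$ on the current path is $\langle E_{k+1}\rangle$.
\end{itemize}
If the invariant holds, the rotation is legal: Lemma~\ref{lem:adj} gives that the pivot $\langle A_k\rangle$ is adjacent to the end $\langle E_k\rangle$. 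By definition of a P\'osa rotation the new end is the successor of the pivot, which by (iii) is $\langle E_{k+1}\rangle$. This is the claimed end after step $k$.

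To keep (ii) and (iii) under control, use the position structure. Each P\'osa rotation at pivot $P_j$ fixes the prefix $P_0\cdots P_j$ and reverses the suffix after it. So it helps to know that the pivots move monotonically in $D_n$. I will show $j_0>j_1>\dots>j_{K-1}$, i.e.\ $\rk(A_{k+1})<\rk(A_k)$, and also $\rk(E_{k+1})<\rk(A_k)$ fails in the right way. Concretely, I expect the ordering
\[
\rk(A_{k+1})<\rk(E_{k+2})<\rk(A_k)<\rk(E_{k+1})\le\rk(E_0).
\]
The comparisons follow from prefix parities: $A_{k+1}$ and $E_{k+2}$ begin with $0^{2k+3}$, while $A_k$ begins with only $0^{2k+1}1$. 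So $\pi(A_{k+1})$ has more leading zeros and is lexicographically smaller. By Lemma~\ref{lem:first}, Lemma~\ref{lem:succ} and the prefix-parity characterization of rank, these words' ranks are the positions of their necklaces in $D_n$. For $k=0$ the comparison of $A_1$ with $A_0$ is again a leading-zeros count. In addition, $\langle E_{k+1}\rangle$ is immediately after $\langle A_k\rangle$ in $D_n$ (Lemma~\ref{lem:succ}).

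With this ordering the induction goes through. After the rotations at $A_0,\dots,A_{k-1}$, the prefix of the path up to position $j_{k-1}$ is untouched original $D_n$. Since $j_k<j_{k-1}$, the segment $\langle A_k\rangle,\langle E_{k+1}\rangle$ (at positions $j_k,j_k+1\le j_{k-1}$) still sits in that untouched prefix in its original order. This gives (ii) and (iii) for step $k$. Part (i) is the conclusion of the previous step. The base case $k=0$ is Lemma~\ref{lem:last} together with $j_0<$ the position of $\langle E_0\rangle$ and Lemma~\ref{lem:succ}.

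One point needs care: $j_k+1\le j_{k-1}$ must be strict enough that $\langle E_{k+1}\rangle$ is not the former pivot itself. Since $E_{k+1}\ne A_{k-1}$ as necklaces (different zero-run structure), $j_k+1<j_{k-1}$ holds. The main obstacle I expect is the $k=0$ comparisons and distinguishing the necklaces (e.g.\ that $\langle E_1\rangle\ne\langle A_0\rangle$ and that the $E$'s and $A$'s are pairwise distinct). I would handle these with the explicit rotation lists already used in Lemmas~\ref{lem:first} and~\ref{lem:succ}.
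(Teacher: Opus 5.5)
Your proposal is correct and is essentially the paper's proof. Counting leading zeros of prefix parities gives $\rk(E_{k+1})<\rk(A_{k-1})$; the paper writes this as $\rk(A_k)+1\le 2^{n-2k-1}<2^{n-2k}\le\rk(A_{k-1})$. This puts $\langle A_k\rangle,\langle E_{k+1}\rangle$ consecutively in the untouched prefix $P_0\cdots P_{j_{k-1}}$, and Lemma~\ref{lem:adj} then makes each rotation legal.

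The differences are cosmetic. Your separate necklace-distinctness argument for $j_k+1<j_{k-1}$ is redundant: your rank chain already gives it, and $j_k+1\le j_{k-1}$ would suffice anyway because the old pivot is not moved. At $k=0$ you use Lemma~\ref{lem:last}, where the paper compares $E_1$ with $E_0$ directly.
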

\begin{proof}
By Lemma~\ref{lem:succ}, $\langle E_{k+1}\rangle$ is at position $j_k+1$. For $k=0$: comparing prefix parities,
$\rk(A_0)<\rk(E_1)<\rk(E_0)$ (the words agree up to position $n-4$, and at position $n-3$ the parity is $0$ for
$E_1$ and $1$ for $E_0$), so $j_0+1$ is strictly before the end, $\langle A_0\rangle$ is neither the end nor its
predecessor, and by Lemma~\ref{lem:adj} the rotation is legal with new end $\langle E_1\rangle$. For $k\ge1$:
$A_k$ begins with exactly $2k+1$ zeros and $A_{k-1}$ with $0^{2k-1}1$, so
$\rk(E_{k+1})=\rk(A_k)+1\le 2^{n-2k-1}<2^{n-2k}\le\rk(A_{k-1})$, and since $D_n$ is ordered by rank,
$j_k+1<j_{k-1}$. After the rotation at $\langle A_{k-1}\rangle$ the path is $P_0\cdots P_{j_{k-1}}$ followed by
reversed material, with end $\langle E_k\rangle$; both $\langle A_k\rangle$ and $\langle E_{k+1}\rangle$ lie in the
untouched prefix, at consecutive positions, so the rotation at $\langle A_k\rangle$ (legal by
Lemma~\ref{lem:adj}) has new end $\langle E_{k+1}\rangle$.
\end{proof}

\begin{proof}[Proof of Theorem A]
By Lemma~\ref{lem:last} and the hypothesis, $D_n$ is a Hamilton path of $Q_n$ from $\langle0^{n-1}1\rangle$ to
$\langle E_0\rangle$. By Lemma~\ref{lem:legal} the $K$ rotations produce a Hamilton path from
$\langle0^{n-1}1\rangle$ (the start never moves) to $\langle E_K\rangle=\langle0^{n-4}1001\rangle$, which is adjacent
to $\langle0^{n-1}1\rangle$. This is a Hamilton cycle; apply Lemma~\ref{lem:V}.
\end{proof}

\section{Computations}\label{sec:comp}

\paragraph{Codes for $5\le n\le31$ by search.} We found Hamilton cycles in $Q_n$ for all primes $5\le n\le 31$ by
P\'osa rotation and extension \cite{Pos76}, with Warnsdorff extension and a closing look-ahead, started from the
Degni--Drisko list or from a greedy path. The search took under $2$ seconds for $n=23$, $3.5$ and $12$ minutes for two codes at $n=29$, and
$17$ minutes and $20.6$ GB of memory for $n=31$. Each code was expanded to its full period and checked by at least
two independent programs among a C bitset checker, a NumPy checker that reads only the track and the head positions,
and, for $n=23$, $29$ and $31$, two further independently written checkers.

\paragraph{Codes from Theorem~A.} The construction of Theorem~A is algorithmic: the first occurrences can be
enumerated in rank order with $O(n)$ work each, and the cycle is a concatenation of $K+1$ segments of $D_n$, traversed
forwards or backwards. Our implementation enumerates $D_n$ in parallel, checks Conjecture~\ref{conj:DD} along the way,
assembles the cycle, and lifts it to a flip sequence of length $m=(2^n-2)/n$ together with the shift $s$. For
$7\le n\le29$ the resulting codes pass the C bitset checker, the NumPy checker and the flip-sequence verifier described
next. For $n=37$ the code has $m=3{,}714{,}566{,}310$ and $s=2$; it
was produced in $15$ minutes on four cores. A separately written verifier regenerated all $2^{37}-2$ words from the
flip sequence, marking them in a $2^{37}$-bit table, and confirmed that every nonconstant word occurs exactly once,
that the $n$ passes chain with shift $2$, and that the sequence closes.

\paragraph{The Degni--Drisko property at $n=41$, $43$, $47$.} By Theorem~A, a code for prime $n$ follows from
Conjecture~\ref{conj:DD} at $n$. We verified it by streaming the BRGC ranks with constant memory per thread. At $n=41$,
$D_{41}$ has $53{,}634{,}713{,}550$ necklaces, every consecutive pair is adjacent, and the list runs from
$\langle0^{40}1\rangle$ to $\langle01^{40}\rangle$ ($77$ minutes on four cores). A second program, written
independently without access to the first (it computes the ranks of all rotations incrementally and scans only ranks
below $2^{n-1}$), confirmed the same count, the same first and last necklaces and no non-adjacent pair ($36$ minutes on
four cores); it also confirmed $n=37$. At $n=43$ ($204{,}560{,}302{,}842$ necklaces) both programs found every
consecutive pair adjacent ($4.7$ and $3.0$ hours on four cores, respectively). At $n=47$ ($2{,}994{,}414{,}645{,}858$ necklaces)
both programs, with checkpointing, again found every consecutive pair adjacent and the expected first and last
necklaces ($64$ and $55$ hours of wall time, respectively, on four cores of a shared laptop).

\begin{table}[h]
\centering
\begin{tabular}{rrll}
\toprule
$n$ & period $2^n-2$ & method & verification\\
\midrule
3 & $6$ & explicit & Lean (\texttt{decide})\\
5--19 & $\le 524{,}286$ & search; Theorem A ($n\ge7$) & word by word, 2 checkers\\
23 & $8{,}388{,}606$ & search ($s=1$, $s=11$) & word by word, $\ge3$ checkers\\
29 & $536{,}870{,}910$ & search ($s=12$, $s=16$) & word by word, $\ge3$ checkers\\
31 & $2{,}147{,}483{,}646$ & search ($s=28$) & word by word, $\ge3$ checkers\\
37 & $137{,}438{,}953{,}470$ & Theorem A ($s=2$) & word by word, 1 checker\\
41 & $2{,}199{,}023{,}255{,}550$ & Theorem A & Conjecture~\ref{conj:DD} at $n=41$, 2 programs\\
43 & $8{,}796{,}093{,}022{,}206$ & Theorem A & Conjecture~\ref{conj:DD} at $n=43$, 2 programs\\
47 & $140{,}737{,}488{,}355{,}326$ & Theorem A & Conjecture~\ref{conj:DD} at $n=47$, 2 programs\\
\bottomrule
\end{tabular}
\caption{Optimal single-track Gray codes. ``Word by word'' means that a program written separately from the generator
regenerated every word of the period. For $n=41$, $43$, $47$ the codes are the explicit ones of Theorem~A; they were
not expanded.}
\end{table}

\section{Further observations and open problems}\label{sec:open}

\paragraph{Symmetric chains.} Griggs, Killian and Savage \cite{GKS04} (see also \cite{KRSW04}) gave, for prime $n$, a
rotation-invariant symmetric chain decomposition of the Boolean lattice; it induces a decomposition of $Q_n$ into
paths. Gregor, Mi\v{c}ka and M\"utze \cite{GMM} proved that the Greene--Kleitman decomposition of the cube extends to a
Hamilton cycle and conjectured that every symmetric chain decomposition does.

\begin{conjecture}\label{conj:D}
For every prime $n$ the Griggs--Killian--Savage chains (with $0^n$ and $1^n$ removed) extend to a Hamilton cycle of
$Q_n$.
\end{conjecture}

We found such cycles for every prime $7\le n\le 23$ by a search on the chains as two-ended super-vertices. Every chain
has an odd number of edges, so chain bottoms are joined only to bottoms and tops only to tops, apart from joins between
chains of the middle two levels; these cycles are ``cycle orderings'' in the sense of \cite{GMM}.

\paragraph{Central levels.} Merino, Mi\v{c}ka and M\"utze \cite{MMM} showed that the middle two levels of the cube have a
Hamilton cycle invariant under rotation, for every odd $n$; in particular, the middle two levels of the necklace graph
are Hamiltonian, for prime and composite odd $n$.

\begin{conjecture}\label{conj:H}
For every odd $n=2m+1\ge5$ and every $1\le\ell\le m$, the subgraph of the necklace graph induced by the necklaces of
weight in $[m+1-\ell,m+\ell]$ is Hamiltonian.
\end{conjecture}

The case $\ell=1$ is the theorem of \cite{MMM}, and the case $\ell=m$, $n$ prime, is equivalent (by
Lemma~\ref{lem:V}) to the existence of optimal single-track Gray codes. Our search found such Hamilton cycles, checked by the search
program itself, for all odd $n\le19$ and all $\ell$. For $\ell=m$ and $n=9$, $13$, $15$, $21$, $25$ (periodic necklaces
included for composite $n$) the cycles also passed an independent checker.

\paragraph{Structure of the Degni--Drisko conjecture.} Degni and Drisko prove that the strings skipped between
consecutive Gray necklaces are covered by their ``symmetric intervals'', note that removing a single such interval
preserves Hamming distance one, and identify several intervals as the difficulty \cite[Section 8]{DD07}. For prime
$n\le13$ every gap between consecutive necklaces of $D_n$ is a single symmetric interval. This fails at $n=17$ and
$n=19$: in all $7$ and $21$ exceptional gaps, respectively, the gap consists of exactly three symmetric intervals, a
central one flanked by two whose generators have equal length, between $n-5$ and $n-3$.

\section*{Ancillary files}
The arXiv ancillary files contain the Lean~4 project (\texttt{lean/}, with toolchain and Mathlib pins), the programs
for the search, the construction of Theorem~A and the verifiers, the tracks and head positions of the search codes for
$5\le n\le 23$ (the tracks for $n=29$ and $31$, 0.5 and 2.1 GB uncompressed, exceed arXiv's size limit and are
available at \url{https://github.com/dzoba/single-track-gray-codes}), and the logs of the Degni--Drisko verifications at $n=37$, $41$, $43$ and $47$ and of the checks
described in Section~\ref{sec:comp}. The $n=37$ flip sequence (3.7 GB, SHA-256
\texttt{0a0c91ea19527ee4812ff2b0d8627d3a\allowbreak be6e0d4fbaad611a48efbb13f8c5b811}) can be regenerated by the included
construction program.

\section*{Use of AI tools}
This work was carried out with AI systems under the author's direction. Claude (Anthropic) wrote and ran the searches
and verifiers, found the closing pivots of Theorem~A by computer search, wrote the proofs and the Lean formalization,
and drafted the text. The word-by-word verifiers share no code with the generators, and the second
Degni--Drisko checker was written by a separate AI instance from a specification only. Separate instances refereed the
proof and the formalization, and Codex (OpenAI) wrote two further checkers and checked the codes for $n=23$, $29$ and
$31$. The author
is responsible for the content.

\end{document}